\documentclass[a4paper]{article}

\usepackage{
amsmath,
amsthm,
amscd,
amssymb,
}
\usepackage{comment}
\usepackage{tikz}
\usetikzlibrary{positioning,arrows,calc}
\usepackage{xspace}

\setcounter{tocdepth}{3}
\usepackage{graphicx}

\usepackage{url}

\theoremstyle{plain}
\newtheorem{lemma}{Lemma}
\newtheorem{definition}{Definition}

\newtheorem{proposition}{Proposition}
\newtheorem{theorem}{Theorem}

\newtheorem{remark}{Remark}

\newtheoremstyle{derp}
{3pt}
{3pt}
{}
{}
{\upshape}
{:}
{.5em}
{}
\theoremstyle{derp}

\newcommand{\Z}{\mathbb{Z}}

\title{When are group shifts of finite type?}

\author{
Ville Salo \\
vosalo@utu.fi
}

\begin{document}
\maketitle

\begin{abstract}
It is known that a group shift on a polycyclic group is necessarily of finite type. We show that, for trivial reasons, if a group does not satisfy the maximal condition on subgroups, then it admits non-SFT abelian group shifts. In particular, we show that if a group is elementarily amenable or satisfies the Tits alternative, then it is virtually polycyclic if and only if all its group shifts are of finite type. Our theorems are minor elaborations of results of Schmidt and Osin. 
\end{abstract}

\section{Introduction}


In \cite{Sc95}, it is shown that on polycyclic-by-finite groups, all group shifts are of finite type. We prove a partial converse. A group \emph{satisfies the Tits alternative} if every finitely-generated subgroup either contains a free group on two generators or is virtually solvable. Finite-dimensional linear groups over any field satisfy this. We say $G$ satisfies the \emph{weak Tits alternative} if all its finitely-generated subgroups either contain a free group on two generators or are elementarily amenable.

\begin{theorem}
\label{thm:WeakTits}
Let $G$ be a group that satisfies the weak Tits alternative. Then all group shifts on $G$ are of finite type if and only if $G$ is virtually polycyclic.
\end{theorem}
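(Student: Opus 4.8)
The plan is to prove the two implications separately, obtaining the forward direction directly from Schmidt's theorem and the reverse direction by contraposition, splitting on the maximal condition on subgroups.

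The direction where $G$ is virtually polycyclic is immediate: this is precisely the theorem of \cite{Sc95}, and it needs no assumption on $G$ beyond virtual polycyclicity, so the weak Tits alternative plays no role here. For the converse I would argue the contrapositive: assuming $G$ is \emph{not} virtually polycyclic, I would exhibit a group shift on $G$ that is not of finite type. I split into two cases according to whether $G$ satisfies the maximal condition (max) on subgroups. If max fails, then $G$ has a strictly ascending chain of subgroups, and I would feed this chain into the elementary construction promised in the abstract to produce a non-SFT \emph{abelian} group shift; this handles the case directly and is where the ``trivial reasons'' do the work.

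The remaining case is that $G$ satisfies max, and here I claim a contradiction, so the case is in fact vacuous. Under max every subgroup of $G$ is finitely generated; in particular $G$ itself is finitely generated, so the weak Tits alternative applies to $G$. Now $G$ cannot contain a free group on two generators, since $F_2$ itself fails max (for instance its commutator subgroup is free of infinite rank, hence not finitely generated) and max passes to subgroups. Therefore the weak Tits alternative forces $G$ to be elementarily amenable. By Osin's structure theorem, an elementarily amenable group satisfying max is virtually polycyclic, contradicting our standing assumption. Hence a group satisfying the weak Tits alternative that is not virtually polycyclic necessarily fails max, returning us to the first case.

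The only substantial inputs are the two cited results -- Schmidt's SFT theorem and Osin's structural theorem -- together with the construction of a non-SFT group shift from a failure of max. Given those, the new content is the short observation that max is incompatible with an $F_2$-subgroup, which promotes the weak Tits dichotomy to outright elementary amenability. I expect the main obstacle to be verifying that the ascending chain obtained when max fails really encodes a closed, shift-invariant, non-finite-type subgroup of $A^G$; but this is exactly what the earlier ``trivial'' construction provides, so once it is in hand the argument closes cleanly.
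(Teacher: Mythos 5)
Your proposal is correct and follows essentially the same route as the paper: Schmidt's theorem for the virtually polycyclic direction, Theorem~\ref{thm:Max} when the maximal condition fails, and Osin's result (Lemma~\ref{lem:EAGMAX}) combined with the observation that a group with an $F_2$ subgroup cannot satisfy the maximal condition to rule out the remaining case. The only cosmetic difference is that you apply the weak Tits alternative to $G$ itself (noting that the maximal condition forces $G$ to be finitely generated), whereas the paper quantifies over finitely generated subgroups; the inputs and the logic are otherwise identical.
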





The theorem is a rather direct corollary of the following theorem, and a characterization of virtually polycyclic groups due to Osin \cite{Os02}. 

\begin{theorem}
\label{thm:Max}
Let $G$ be a group which does not satisfy the maximal condition on subgroups. Then there is a group shift on $G$ which is not of finite type.
\end{theorem}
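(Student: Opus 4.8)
The plan is to attach to a non-finitely-generated subgroup an explicit abelian group shift, and show it cannot be SFT because no finite window can detect enough of the subgroup. The whole point is that the local rules of an SFT only ever constrain the part of $H$ that fits inside a bounded region, so a subgroup that needs infinitely many generators cannot be captured.

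First I would record the standard group-theoretic fact that a group fails the maximal condition on subgroups if and only if it contains a subgroup $H$ that is not finitely generated: an infinite strictly ascending chain unions to a non-finitely-generated subgroup, and conversely a non-finitely-generated subgroup contains such a chain. Thus it suffices, given a non-finitely-generated $H \le G$, to produce a group shift on $G$ that is not of finite type. I would then fix a nontrivial finite abelian alphabet, say $A = \Z/2\Z$, and define $X_H = \{x \in A^G : x(g) = x(gh)\ \text{for all } g \in G,\ h \in H\}$, the configurations that are constant on left cosets of $H$. With the left-translation action $(s \cdot x)(g) = x(s^{-1}g)$ this set is shift-invariant; it is an additive subgroup of $A^G$ and is closed, being an intersection of the clopen conditions $x(g) = x(gh)$. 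Hence $X_H$ is a group shift.

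The heart of the argument is the claim that $X_H$ is SFT if and only if $H$ is finitely generated. If $H = \langle h_1, \dots, h_n\rangle$, then constancy on cosets is forced by the finitely many local rules $x(g) = x(gh_i)$, so $X_H$ is SFT. For the converse I would show that for every finite window $F \subseteq G$ the smallest $F$-local group shift containing $X_H$ (the SFT with window $F$ whose allowed patterns are exactly $X_H|_F$) equals $X_{H_0}$, where $H_0 = \langle H \cap F^{-1}F\rangle$. The mechanism: two coordinates $a,b$ lie in a common translate $gF$ precisely when $a^{-1}b \in F^{-1}F$, so the only equalities any $F$-window can impose on configurations of $X_H$ are $x(a) = x(b)$ for $a^{-1}b \in H \cap F^{-1}F$; enforcing these across all translates generates, transitively, exactly constancy on left cosets of $H_0 = \langle H \cap F^{-1}F\rangle$. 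Since $H_0$ is finitely generated while $H$ is not, $H_0 \subsetneq H$, and therefore $X_{H_0} \supsetneq X_H$ (two distinct $H_0$-cosets inside one $H$-coset may be assigned different values). So no finite window defines $X_H$, and $X_H$ is not SFT; applying this to the non-finitely-generated $H$ supplied by the failure of the maximal condition finishes the proof.

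I expect the main obstacle to be the identification of the $F$-local hull of $X_H$ with $X_{H_0}$: precisely, proving that a window of shape $F$ can impose no equality beyond $a^{-1}b \in H \cap F^{-1}F$, and that iterating these local equalities reconstructs exactly the subgroup they generate. Everything else is routine: verifying that $X_H$ is a group shift, handling the finitely-generated direction, checking the strict inclusion $X_{H_0} \supsetneq X_H$, and extracting the non-finitely-generated subgroup from the hypothesis. (The same content can be phrased via Pontryagin duality, identifying $X_H$ with the annihilator of the left ideal of $A[G]$ generated by $\{h-1 : h \in H\}$ and matching ``SFT'' with ``finitely generated ideal'', but the direct window argument above is self-contained and is the route I would write up.)
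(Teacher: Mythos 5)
Your proof is correct, and the object you construct is in fact the very same group shift the paper ends up with: the coset extension of the two-point shift $\{0^H,1^H\}$ on $H$ used in the paper is precisely your $X_H$, the configurations constant on left cosets of $H$, and both arguments ultimately fool a finite window with (the indicator function of) a finitely generated subgroup of $H$. The routes there are genuinely different, though. The paper first proves that the \emph{trivial} two-point shift on the non-finitely-generated group $H$ itself fails to be SFT, and then transports this to $G$ via a general lemma that the coset extension $Y^{G/H}$ is SFT if and only if $Y$ is; this requires developing subshifts on non-finitely-generated groups and a somewhat delicate restriction argument (building points of $X$ coset by coset), but yields a reusable transfer principle. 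You instead stay on $G$ throughout and compute the $F$-local hull of $X_H$ exactly, identifying it as $X_{H_0}$ with $H_0=\langle H\cap F^{-1}F\rangle$; this is more self-contained and arguably more informative, since it pins down precisely which subshift a given window defines rather than only exhibiting one stray point. Two small things you should make explicit when writing it up: the standard fact that an SFT defined by a clopen set of support $F$ is also defined by the canonical window $\{x : x|_F\in X|_F\}$ (so it suffices to rule out the $F$-local hull), and the observation that $H\cap F^{-1}F$ is closed under inversion, which is what lets the transitive closure of the local equalities $x(a)=x(b)$, $a^{-1}b\in H\cap F^{-1}F$, recover constancy on full left cosets of the generated subgroup $H_0$; the witness $x\in X_{H_0}\setminus X_H$ can then be taken to be the indicator function of $H_0$, exactly as in the paper's Lemma on the two-point shift.
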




We give a direct proof of this. The essence of it, and probably also the construction itself, can be extracted from Schmidt \cite{Sc95}. 

The full characterization of groups where all group shifts are of finite type stays open. Theorem~\ref{thm:WeakTits} shows that if such a group is not virtually polycyclic, it is a counterexample to a strong form of the ``Von Neumann conjecture''. Of course there are such examples, such as Tarski monsters (see \cite{Ol82}), which satisfy the maximal condition on subgroups but are not virtually polycyclic. We do not know whether such groups support group shifts that are not of finite type. 




\section{The construction}

In this section, we prove Theorem~\ref{thm:Max}. 

\begin{remark}
The construction is written straight from the definitions, in somewhat excessive detail. This is because we found it (and especially the proof method) quite surprising at the time of writing. Somehow, we had a hard time believing that finite subshifts on non-finitely-generated groups could be essential for obtaining results about uncountable subshifts on finitely-generated groups, and that there had to be some gap in the logic. Of course, after the fact this is nothing but an instance of the observation that sometimes you need to step into a larger family of mathematical objects to understand the smaller family you are actually studying.
\end{remark}

All groups and finite sets are discrete and $A^B$ with $A, B$ discrete has the product topology. The reader may safely assume all groups are countable (the case of uncountable groups is somewhat trivial), but we emphasize that groups are not assumed to be finitely-generated. 

\begin{definition}
Let $G$ be a group and $\Sigma$ a finite set. Then $X \subset \Sigma^G$ is a \emph{subshift} if it is topologically closed and closed under the action of $G$ given by the shifts defined by $(g \cdot x)_h = x_{g^{-1}h}$. A subshift is \emph{of finite type} if there is a clopen subset $C \subset \Sigma^G$ such that $x \in \Sigma^G$ is in $X$ if and only if $\forall g \in G: g \cdot x \in C$. More generally, we say $G \curvearrowright X$ is a subshift if it is \emph{conjugate} to one, that is, homeomorphic to one by a shift-commuting homeomorphism.
\end{definition}

Clearly being of finite type is preserved under conjugacy, since this condition is defined in terms of the topology and the shift maps, as this structure is preserved by conjugacies. For groups that are not finitely-generated, it is not clear that this is a particularly interesting definition, but it corresponds to the usual notion in the finitely-generated case, and non-finitely-generated groups are crucial in the proof even if we are only interested in finitely-generated ones.


\begin{definition}
If $\Sigma$ is a group, then a \emph{group shift} (on alphabet $\Sigma$) is a subshift $X \subset \Sigma^G$ such that $X$ is also a subgroup of $\Sigma^G$ under cellwise operations $(x \cdot y)_g = x_g \cdot y_g$.
\end{definition}

It is known that group shifts as defined above are precisely the internal groups of the category of subshifts (up to operation-preserving isomorphism).

\begin{lemma}
\label{lem:NonSFTGroupShift}
Suppose $G$ is not finitely-generated. Then its trivial action on $\Z/2\Z$ is a group shift, but is not of finite type.
\end{lemma}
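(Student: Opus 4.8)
The plan is to realize the described group shift concretely and then defeat any candidate finite-type description using a coset construction. First I would take $\Sigma = \Z/2\Z$ and let $X \subset \Sigma^G$ be the set of the two constant configurations $\mathbf{0}$ and $\mathbf{1}$. This $X$ is finite, hence closed; it is shift-invariant, since shifting a constant configuration returns the same configuration; and it is a subgroup under cellwise addition, isomorphic to $\Z/2\Z$. Moreover $(g\cdot x)_h = x_{g^{-1}h}$ equals the constant value of $x$, so $g \cdot x = x$ for all $g$, and thus $G \curvearrowright X$ is exactly the trivial action on $\Z/2\Z$. This settles the ``group shift'' half of the claim, and it remains to show that $X$ is not of finite type.

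Suppose for contradiction there is a clopen $C \subset \Sigma^G$ with $x \in X \iff \forall g\in G: g\cdot x \in C$. Since $\Sigma$ is finite, $\Sigma^G$ is compact and $C$, being clopen, is a finite union of cylinders; hence there are a finite set $F \subset G$ and a set of patterns $P \subset \Sigma^F$ with $x \in C \iff x|_F \in P$. Let $H = \langle F\rangle$ be the subgroup generated by $F$. As $F$ is finite, $H$ is finitely-generated, so by hypothesis $H \neq G$ and $[G:H] \geq 2$. Since $\mathbf{0}, \mathbf{1} \in X$, taking $g = e$ shows that the all-zero and all-one patterns on $F$ both lie in $P$.

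The key step is to build a configuration that passes the window test but fails to be constant. I would partition $G$ into left cosets of $H$ and define $x \in \Sigma^G$ to be constant on each coset, choosing the values so that $x$ is not globally constant (possible since $[G:H] \geq 2$). The crucial observation is that each test window $g^{-1}F$ is contained in the single left coset $g^{-1}H$, so $x$ restricted to $g^{-1}F$ is constant, and therefore $(g\cdot x)|_F$ is either the all-zero or the all-one pattern, both of which lie in $P$. Hence $g \cdot x \in C$ for every $g$, so the finite-type condition forces $x \in X$; but $x$ is not constant, a contradiction. I expect the only real obstacle to be bookkeeping: one must verify carefully that clopen sets depend on finitely many coordinates and, above all, that the window $g^{-1}F$ genuinely sits inside one coset of $H$, which is precisely what converts ``not finitely-generated'' into the existence of a spurious locally-admissible configuration.
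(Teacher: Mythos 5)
Your proof is correct and follows essentially the same route as the paper: both extract a finite support $F$ (the paper's $A$) for the clopen set, pass to the proper subgroup $H = \langle F\rangle$, and exhibit a locally admissible non-constant configuration that is constant on cosets of $H$. The paper's witness is simply the indicator function of $H$, which is the special case of your coset-constant construction with value $1$ on $H$ and $0$ elsewhere.
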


\begin{proof}
Let $\Sigma = \Z/2\Z$ and define $X = \{0^G, 1^G\}$. Then clearly $X$ is a subshift of $\Sigma^G$ and $a \mapsto (g \mapsto a)$ defines a conjugacy from the trivial action of $G$ on $\Z/2\Z$ to the subshift $X$, where $X$ inherits a symbolwise group operation from $\Z/2\Z$.

We recall the characterization of a clopen set in $\Sigma^G$ in terms of coordinates: $C \subset \Sigma^G$ is clopen if and only if there exists a finite set $A \subset G$ called the \emph{support} of $C$ such that $x \in C \iff \exists x' \in C: x'|_A = x|_A$.

Suppose now that $X$ is of finite type, and let $C$ be a clopen set defining it with support $A$. Then $A$ generates a proper subgroup of $X$, say $H \leq G$. Define $x \in \{0,1\}^\Sigma$ by $x_g = 1 \iff g \in H$. Then $g \cdot x \in C$ for all $x$, since if $g \in H$ then $(g \cdot x)_h = x_{g^{-1} h} = 1$ for all $h \in H$, so in particular $(g \cdot x)|_A = 1^G|_A$ since $A \subset H$. On the other hand if $g \notin H$, then $(g \cdot x)_h = x_{g^{-1} h} = 0$ for all $h \in H$, so in particular $(g \cdot x)|_A = 0^G|_A$ since $A \subset H$.

This contradicts the assumption that $C$ defines the subshift $X$, since the point $x$ is not in $X$.
\end{proof}

\begin{definition}
Let $H \leq G$ and let $Y \subset \Sigma^H$ be an $H$-subshift. The \emph{coset extension} $Y^{G/H}$ is defined as the subset of $\Sigma^G$ defined by
\[ x \in Y^{G/H} \iff \forall g \in G: g \cdot x|_H \in Y. \]
\end{definition}

Note that this is the ``full'' coset extension, where each coset has an independent copy of $Y$. 


\begin{lemma}
\label{lem:SFTIff}
Let $H \leq G$, and let $Y$ be an $H$-subshift. Then the coset extension $X = Y^{G/H}$ is a $G$-subshift, and it is of finite type if and only if $Y$ is.
\end{lemma}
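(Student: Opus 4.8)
The plan is to dispatch the two structural claims (that $X=Y^{G/H}$ is a $G$-subshift, and the ``if'' direction) quickly, then concentrate on the ``only if'' direction, which carries the real content. Throughout I will use the coordinate picture of the coset extension: fixing a left transversal of $G/H$, a point $x$ lies in $X$ precisely when the pattern read off \emph{each} left coset lies in $Y$, so that the cosets of $H$ carry mutually independent copies of $Y$. First I would verify that $X$ is a subshift. Closedness is immediate, since $x\in X$ is the intersection over $g\in G$ of the conditions $(g\cdot x)|_H\in Y$, each closed because $Y$ is closed and $x\mapsto(g\cdot x)|_H$ is continuous; shift-invariance follows by reindexing the quantifier, as $gf$ ranges over $G$ when $g$ does.

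For the ``if'' direction, assume $Y$ is SFT, presented by a clopen $D\subset\Sigma^H$ with finite support $B\subset H$, so $y\in Y\iff\forall h\in H: h\cdot y\in D$. I would take $C=\{x\in\Sigma^G : x|_H\in D\}$, which is clopen with support $B$ regarded inside $G$. The one computation needed is the identity $h\cdot\big((g\cdot x)|_H\big)=((hg)\cdot x)|_H$ for $h\in H$, $g\in G$. Feeding it into the presentation of $Y$ rewrites the defining condition $\forall g:(g\cdot x)|_H\in Y$ as $\forall g\,\forall h\in H:((hg)\cdot x)|_H\in D$, and since $hg$ ranges over all of $G$ this is exactly $\forall g': g'\cdot x\in C$. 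Hence $C$ presents $X$ as an SFT.

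For the ``only if'' direction, assume $X$ is SFT, given by a clopen $C\subset\Sigma^G$ with finite support $A\subset G$ and allowed $A$-patterns $P\subset\Sigma^A$; I may assume $Y\neq\emptyset$, since otherwise $X=\emptyset$ and both sides are trivially SFT, and I fix some $y_0\in Y$. The crucial structural observation is a \emph{factorization of windows}: by the independence of the cosets of $H$ in $X$, a finite pattern on a translate $g^{-1}A$ extends to a point of $X$ if and only if, for every coset $cH$, its restriction to $g^{-1}A\cap cH$ extends to a valid pattern on that coset (an $H$-translate of a subpattern of $Y$). Using that $(g\cdot x)|_A\notin P$ is equivalent to $x|_{g^{-1}A}$ being non-extendable, this shows that $x\notin X$ forces some window to fail on a \emph{single} coset. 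Given $y\notin Y$, I would build $x\in\Sigma^G$ equal to $y$ on the coset $H$ and to $y_0$ on every other coset; then $x\notin X$, and since every coset other than $H$ carries the valid pattern $y_0$, the failing coset must be $H$ itself, yielding $g$ with $y|_{g^{-1}A\cap H}$ not extendable to $Y$.

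The final and most delicate step is to pin this defect to a \emph{fixed} finite window. Choosing $a\in A$ with $g^{-1}a\in H$ and left-multiplying by $(g^{-1}a)^{-1}\in H$ carries $g^{-1}A\cap H$ onto $(a^{-1}A)\cap H\subset B:=(A^{-1}A)\cap H$, a fixed finite subset of $H$; by shift-invariance of $Y$ this exhibits an $H$-translate of $y$ whose restriction to $B$ fails to extend to $Y$. Setting $Q=\{y'|_B : y'\in Y\}$ and $D=\{z\in\Sigma^H : z|_B\in Q\}$ (clopen with support $B$, in the sense recalled in the proof of Lemma~\ref{lem:NonSFTGroupShift}), this gives $y\notin Y\Rightarrow y$ has a $B$-window outside $Q$; the reverse containment $Y\subset\{z : \forall h, (h\cdot z)|_B\in Q\}$ is immediate, so $D$ presents $Y$ as an SFT. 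I expect the main obstacle to be exactly the factorization-and-localization argument together with this translation bookkeeping: making precise that a single forbidden $G$-window must manifest on one coset, and that all the resulting $H$-defects can be absorbed into the one window $B=(A^{-1}A)\cap H$ up to $H$-translation.
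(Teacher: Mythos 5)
Your proof is correct and, at bottom, runs on the same mechanism as the paper's: the same clopen set $\pi^{-1}(D)$ for the easy direction, the same window $B=(A^{-1}A)\cap H$, the same hybrid point ($y$ on the coset $H$, a fixed $y_0\in Y$ on every other coset), and the same translation by $(g^{-1}a)^{-1}=a^{-1}g$ carrying $g^{-1}A\cap H$ into $B$. The difference is organizational. The paper argues by contradiction: it posits, for each $h\in H$, a witness $y^h\in Y$ agreeing with $y$ on the relevant translated window, forms the corresponding hybrid points $x^h\in X$, and compares $x$ with $x^h$ coordinate by coordinate on $g^{-1}A$ to force $x\in X$. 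You instead isolate the cleaner structural statement that extendability of a finite pattern to a point of $X$ factorizes over the cosets of $H$ (by the independence built into the coset extension), which localizes the forbidden window to the coset $H$ directly; this is arguably more transparent and dispenses with the family $y^h$. Two small points to repair in the write-up. First, your claim that $(g\cdot x)|_A\notin P$ is \emph{equivalent} to non-extendability of $x|_{g^{-1}A}$ is false as stated: an SFT presentation may admit locally allowed patterns that are not globally admissible, so only the implication from $(g\cdot x)|_A\notin P$ to non-extendability holds. Fortunately that is the only direction your argument uses, so nothing breaks. Second, the existence of $a\in A$ with $g^{-1}a\in H$ should be justified: if $g^{-1}A\cap H=\emptyset$ then the $H$-piece of the window is the empty pattern, which extends since $Y\neq\emptyset$, so $H$ could not have been the failing coset; hence the intersection is nonempty exactly when you need it to be.
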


\begin{proof}
Since $Y$ is closed and translations are continuous, also $X$ is closed. Also $X$ is indeed a subshift: If $x \in X$ and $g' \in G$, then
\[ \forall g \in G: (g \cdot (g' \cdot x))|_H = (gg' \cdot x)|_H \in Y, \]
which implies that $g' \cdot x \in X$.

Let $\pi : \Sigma^G \to \Sigma^H$ be the projection map $\pi(x) = x|_H$. This map is clearly continuous. Suppose that $Y$ is of finite type. Let $D \subset \Sigma^H$ be the clopen set defining $Y$. Define $C \subset \Sigma^G$ by
\[ x \in C \iff \pi(x) = x|_H \in D, \]
that is, $C = \pi^{-1}(D)$. Then $C$ and $X \setminus C$ are open as they are preimages of the open sets $D$ and $Y \setminus D$, so $C$ is clopen.

We claim that $x \in \Sigma^G$ is in $X$ if and only if $g \cdot x \in C$ for all $g \in G$. To see this, suppose first that $x \in X$. Then for all $g \in G$, $g \cdot x|_H \in Y$, so in particular $g \cdot x|_H \in D$, implying $g \cdot x \in C$. Next, suppose $g \cdot x \in C$ for all $g \in G$. Then in particular $h \cdot (g \cdot x) \in C$ for all $g \in G, h \in H$, and thus $(h \cdot (g \cdot x))|_H \in D$. Let $y = (g \cdot x)|_H$. Then $(h \cdot (g \cdot x))|_H = h \cdot y$ because
\[ (h \cdot (g \cdot x))_{h'} = (g \cdot x)_{h^{-1} h'} = y_{h^{-1} h'} = (h \cdot y)_{h'}. \]
It follows that $h \cdot y \in D$ for all $h \in H$, so $y \in Y$. This shows that $g \cdot x|_H \in Y$ for all $g \in G$, implying $x \in X$. We have shown that $X$ is of finite type.



Next, we prove that if $X$ is SFT, then so is $Y$. Since even sets of $\Z$-rows of $\Z^2$-SFTs can be $\Pi^0_1$-hard, this is obviously very specific to the fact we are restricting precisely the coset extension $X = Y^{G/H}$ of some $H$-subshift $Y$ back to $H$.

Suppose that the clopen set $C$ defining $X$ has support $A$. We claim that there is a clopen set with support $B = A^{-1} A \cap H$ defining $Y$. Suppose not. Then there is in particular a point $y \in \Sigma^H \setminus Y$ such that for any $h \in H$, there exists a point $y^h \in Y$ such that $(h \cdot y^h)|_B = (h \cdot y)|_B$, as otherwise we could pick $\{z \;|\; z|_B \in Y|_B\}$ as a clopen set with support $B$ defining $Y$.

If $Y$ is empty, then it is an SFT (since the empty set is clopen) and we are done. Otherwise pick any $z \in Y$, and let $y \notin Y$ and $y^h \in Y$ for all $h \in H$ be as above. Pick representatives $1_G = g_1, g_2, \ldots$ for the left cosets of $H$, so that $g_i H$ and $H^{-1} g_i^{-1} = H g_i^{-1}$ form two (possibly distinct) partitions of $G$, and $H = g_1 H = H g_1^{-1}$. Define $x \in \Sigma^G$ by $x_{h} = y_h$ and $x_{g_i h} = z_h$ whenever $h \in H$ and $i > 1$, and similarly define $x^h$ for all $h \in H$ by $x^h_{h'} = y^h_{h'}$ and $x^h_{g_i h'} = z_{h'}$ whenever $h' \in H$ and $i > 1$.

We claim that $x^h \in X$ for all $h \in H$. For this let $g \in G$ be arbitrary, and write $g = \hat h g_i^{-1}$. If $i > 1$, then $g \notin H$ and we have
\[ (g \cdot x^h)_{h'} = x^h_{g^{-1} h'} = x^h_{g_i \hat h^{-1} h'} = z_{\hat h^{-1} h'} = (\hat h \cdot z)_{h'} \]
so in this case $g \cdot x^h|_H = \hat h \cdot z \in Y$. Otherwise
\[ (g \cdot x^h)_{h'} = x^h_{g^{-1} h'} = x^h_{g_i \hat h^{-1} h'} = y^h_{\hat h^{-1} h'} = (\hat h \cdot y^h)_{h'} \]
so again $g \cdot x^h|_H = \hat h \cdot y^h \in Y$. By the definition of the coset extension, we have $x^h \in X$.

We claim that now also $x \in X$. Suppose not, so that $g \cdot x \notin C$ for some $g \in G$. First, suppose that $g^{-1} A$ does not intersect $H$, pick $h \in H$ arbitrarily and compute
\[ (g \cdot x^h)_a = x^h_{g^{-1} a} = x_{g^{-1} a} = (g \cdot x)_a \]
for all $a \in A$, so actually $g \cdot x^h \notin C$, a contradiction.

Suppose next that $g^{-1} A$ does intersect $H$, and let $g^{-1} a = h^{-1} \in H$. We claim that then $g \cdot x^h \notin C$. Let $a' \in A$. If $g^{-1} a' \notin H$, then as above
\[ (g \cdot x^h)_{a'} = x^h_{g^{-1} a'} = x_{g^{-1} a'} = (g \cdot x)_{a'}. \]
If $g^{-1} a' \in H$, then also $b = (g^{-1} a)^{-1} g^{-1} a' = a^{-1} a' \in H$. Note also that, by form, we have $b \in B$. We thus have
\[ (g \cdot x^h)_{a'} = x^h_{g^{-1} a'} = y^h_{g^{-1} a'} = y^h_{g^{-1} a b} \]
where the second equality follows because $g^{-1} a' \in H$, and
\[ (g \cdot x)_{a'} = x_{g^{-1} a'} = y_{g^{-1} a'} = y_{g^{-1} a b} \]
by the same computation.  Now $y^h_{g^{-1} a b} = y_{g^{-1} a b}$ because $b \in B$ and by the choice of the $y^h$:
\[ y^h_{g^{-1} a b} = y^h_{h^{-1} b} = (h \cdot y^h)_b = (h \cdot y)_b = y_{h^{-1} b} = y_{g^{-1} a b}. \]
We have shown that in any case $(g \cdot x^h)_{a'} = (g \cdot x)_{a'}$ for $a' \in A$, which implies $g \cdot x^h \notin C$, a contradiction with $x^h \in X$.
\end{proof}

\begin{lemma}
\label{lem:CosetExtGroupShift}
If $\Sigma$ is a finite group, $H \leq G$ and $Y \subset \Sigma^H$ is a group shift, then $Y^{G/H}$ is a group shift.
\end{lemma}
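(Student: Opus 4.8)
The plan is to leverage Lemma~\ref{lem:SFTIff}, which already establishes that $X = Y^{G/H}$ is a $G$-subshift; thus it remains only to verify that $X$ is a subgroup of $\Sigma^G$ under the cellwise operation. The key observation is that the defining condition of the coset extension is a conjunction of conditions, each of which pulls back the subgroup $Y$ along a group homomorphism.

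First, I would fix $g \in G$ and consider the map $\phi_g : \Sigma^G \to \Sigma^H$ given by $\phi_g(x) = (g \cdot x)|_H$. I claim this is a group homomorphism with respect to the cellwise operations. Indeed, the shift $x \mapsto g \cdot x$ is an automorphism of the group $\Sigma^G$, since for all $h$ we have $(g \cdot (x \cdot y))_h = (x \cdot y)_{g^{-1}h} = x_{g^{-1}h} \cdot y_{g^{-1}h} = (g \cdot x)_h \cdot (g \cdot y)_h$, a computation valid even when $\Sigma$ is nonabelian because the factors appear in the correct order. The restriction map $z \mapsto z|_H$ is visibly a homomorphism $\Sigma^G \to \Sigma^H$, being a coordinate projection. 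Hence the composite $\phi_g$ is a homomorphism.

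Second, I would rewrite the definition of the coset extension as $X = \bigcap_{g \in G} \phi_g^{-1}(Y)$. Since $Y$ is a group shift, it is in particular a subgroup of $\Sigma^H$. The preimage of a subgroup under a homomorphism is a subgroup, so each $\phi_g^{-1}(Y)$ is a subgroup of $\Sigma^G$, and an arbitrary intersection of subgroups is again a subgroup. Therefore $X$ is a subgroup of $\Sigma^G$, which together with Lemma~\ref{lem:SFTIff} shows that $Y^{G/H}$ is a group shift.

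I do not anticipate a genuine obstacle here: once the homomorphism property of $\phi_g$ is isolated, the statement is a formal consequence of the closure of subgroups under preimages and intersections. The only point demanding a little care is confirming that the shift respects the group operation in the correct order, so that the argument survives for nonabelian $\Sigma$; this is exactly what the displayed computation verifies.
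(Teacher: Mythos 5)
Your proof is correct and rests on the same key computation as the paper's, namely $(g \cdot (x \cdot x'))_h = (g\cdot x)_h \cdot (g\cdot x')_h$; the paper verifies the product case directly and dismisses inverses as ``similar,'' while your packaging via the homomorphisms $\phi_g$ and the identity $X = \bigcap_{g\in G}\phi_g^{-1}(Y)$ gets closure under inverses (and the identity) for free. This is essentially the same argument, just stated slightly more cleanly.
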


\begin{proof}
If $x, x' \in Y^{G/H}$, $g \in G$ and $h \in H$, then
\[ (g \cdot (x \cdot x'))_h = (x \cdot x')_{g^{-1} h} = x_{g^{-1} h} \cdot x'_{g^{-1} h} = (g \cdot x)_h \cdot  (g \cdot x')_h. \]
Letting $g \cdot x = y$, $g \cdot x' = y'$, we have $y \cdot y' \in Y$, implying $(g \cdot (x \cdot x')) \in Y$. The proof for inverses is similar.
\end{proof}

\begin{definition}
A group $G$ \emph{satisfies the maximal condition on subgroups} if for any sequence $G_1 \leq G_2 \leq G_3 \leq ...$ of subgroups, the sequence eventually stabilizes, i.e. for some $i$, $G_i = G_{i+j}$ for all $j \geq 0$.
\end{definition}

\begin{lemma}
A group $G$ satisfies the maximal condition on subgroups if and only if all its subgroups are finitely-generated.
\end{lemma}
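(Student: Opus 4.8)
The plan is to prove the two implications separately, using the standard dictionary between the ascending chain condition and finite generation.

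First I would show that if every subgroup of $G$ is finitely generated, then $G$ satisfies the maximal condition. Given an ascending chain $G_1 \leq G_2 \leq G_3 \leq \cdots$, I would form the union $H = \bigcup_i G_i$. The key routine observation is that a union of an ascending chain of subgroups is again a subgroup: any two elements lie in a common $G_i$ (since the chain is nested), so products and inverses stay inside the union. By hypothesis $H$ is finitely generated, say $H = \langle a_1, \ldots, a_n \rangle$. Each generator $a_j$ lies in some $G_{i_j}$, and setting $N = \max_j i_j$ puts all generators into $G_N$. Hence $H = \langle a_1, \ldots, a_n \rangle \leq G_N \leq H$, so $G_N = H$ and the chain stabilizes at $N$.

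For the converse I would argue by contraposition: assuming some subgroup $H \leq G$ is \emph{not} finitely generated, I would build a strictly increasing chain that never stabilizes, contradicting the maximal condition. Inductively pick $a_1 \in H$ arbitrary and set $H_1 = \langle a_1 \rangle$; having chosen $a_1, \ldots, a_n$, note that $H_n = \langle a_1, \ldots, a_n \rangle$ is a proper subgroup of $H$, precisely because $H$ is not finitely generated and hence is not equal to any finitely generated subgroup, so I may choose $a_{n+1} \in H \setminus H_n$. This guarantees $H_n \subsetneq H_{n+1}$ at every step, producing a strictly ascending chain $H_1 < H_2 < \cdots$ with no stable point, which is the desired contradiction.

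The two directions are essentially dual and neither is technically deep. The only point requiring a little care is that the second construction uses dependent choice to select the sequence $(a_n)$; since the reader is told groups may be assumed countable, one can sidestep any set-theoretic concern by fixing a well-ordering (or enumeration) of $H$ and always taking the least available element of $H \setminus H_n$. I expect the main thing to state cleanly, rather than a genuine obstacle, is the verification that the union of the chain is a subgroup in the first direction and that $H_n$ is proper at every finite stage in the second.
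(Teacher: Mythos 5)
Your proof is correct and follows essentially the same route as the paper: one direction takes the union of the chain and uses that a finite generating set must land in some $G_N$, and the other builds a strictly increasing chain inside a non-finitely-generated subgroup. The paper states both steps in one terse paragraph; your version just spells out the same two arguments in full.
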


\begin{proof}
If $H \leq G$ is not finitely-generated, then clearly one can enumerate a countable sequence of increasing subgroups of $H$, and in particular they are subgroups of $G$. If $G$ does not satisfy the maximal condition on subgroups, then $\bigcup_i G_i$ is a subgroup that is obviously not finitely-generated.
\end{proof}

\begin{proof}[Proof of Theorem~\ref{thm:Max}]
Suppose $H \leq G$ is not finitely-generated. Then by Lemma~\ref{lem:NonSFTGroupShift} there is a group shift $Y \subset \Sigma^H$ which is not of finite type. Then by Lemma~\ref{lem:CosetExtGroupShift}, $Y^{G/H}$ is a group shift, and by Lemma~\ref{lem:SFTIff} it is not of finite type.
\end{proof}


\section{Groups without the weak Markov property}

\begin{definition}
A group $G$ has the \emph{weak Markov property} if all group shifts it supports are of finite type.
\end{definition}

This is a weaker version of the notion of \emph{Markov type} as defined in \cite{Sc95}, see Lemma~\ref{lem:MarkovGSFT} for a proof. Theorem~\ref{thm:Max} states precisely that groups having the weak Markov property satisfy the maximal condition on subgroups.

The maximal condition on subgroups is very strong. We include a brief discussion of groups that do not have this property, and in particular prove Theorem~\ref{thm:WeakTits}.

We begin with a few examples which cover some of our favorite groups such as free groups and the lamplighter group.

\begin{proposition}
A free product $G * H$ for nontrivial $G, H$ satisfies the maximal condition on subgroups if and only if it has the weak Markov property if and only if $|G| = |H| = 2$.
\end{proposition}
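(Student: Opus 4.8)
The plan is to prove the two claimed equivalences by establishing a cycle of three implications among the conditions (i) $G*H$ satisfies the maximal condition on subgroups, (ii) $G*H$ has the weak Markov property, and (iii) $|G|=|H|=2$. The implication (ii)$\Rightarrow$(i) is already available: it is exactly the content of Theorem~\ref{thm:Max} in contrapositive form, since a group failing the maximal condition has a non-finitely-generated subgroup and hence supports a non-SFT group shift. So it remains to prove (iii)$\Rightarrow$(ii) and (i)$\Rightarrow$(iii), which closes the loop (i)$\Rightarrow$(iii)$\Rightarrow$(ii)$\Rightarrow$(i) and yields all equivalences at once. Notably this strategy sidesteps the open general converse of Theorem~\ref{thm:Max}: for free products the only group satisfying the maximal condition turns out to be infinite dihedral, which is handled directly.

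For (iii)$\Rightarrow$(ii), I would observe that when $|G|=|H|=2$ the free product $\Z/2\Z * \Z/2\Z$ is the infinite dihedral group, which contains $\Z$ as an index-two subgroup and is therefore virtually polycyclic. By Schmidt's theorem, cited in the introduction, all group shifts on polycyclic-by-finite groups are of finite type; hence $G*H$ has the weak Markov property.

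For (i)$\Rightarrow$(iii) I would argue the contrapositive via the classical structure theory of free products: if $G,H$ are nontrivial but not both of order two, then $G*H$ contains a nonabelian free subgroup $F_2$. Concretely, choosing nontrivial $a\in G$ and $b\in H$ realizes $\langle a\rangle * \langle b\rangle\cong \Z/m\Z * \Z/n\Z$ (orders possibly infinite) inside $G*H$, and such a free product of cyclic groups contains a rank-two free subgroup whenever $(m,n)\neq(2,2)$ — the exceptional case being exactly infinite dihedral. The only configuration not settled by a single pair of cyclic factors is when both factors are elementary abelian $2$-groups with one of order $\geq 4$; there one instead picks two distinct involutions in that factor together with an involution in the other, giving a subgroup $(\Z/2\Z)^2 * \Z/2\Z$, whose negative Euler characteristic again forces a free subgroup. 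Since $F_2$ has non-finitely-generated subgroups (for instance the infinite-rank free group generated by $\{a^nba^{-n}: n\in\Z\}$), so does $G*H$, and therefore $G*H$ fails the maximal condition on subgroups by the lemma characterizing that condition through finite generation of all subgroups.

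The only genuinely group-theoretic input, and thus the step deserving the most care, is the dichotomy used in (i)$\Rightarrow$(iii): that the infinite dihedral group is precisely the free product of two nontrivial groups avoiding a rank-two free subgroup. This is classical, but the write-up should be explicit about the case split (an element of order $\geq 3$ in some factor versus both factors being elementary abelian $2$-groups), since the naive "one cyclic subgroup from each factor" argument degenerates to infinite dihedral in the second case. Everything else reduces to a direct appeal to Theorem~\ref{thm:Max} and to Schmidt's SFT theorem for virtually polycyclic groups.
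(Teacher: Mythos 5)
Your proposal is correct, and its overall architecture matches the paper's: the dihedral case $|G|=|H|=2$ is dispatched via virtual polycyclicity and Schmidt's theorem (the paper routes this through Proposition~\ref{prop:VirtSol}), and the remaining cases are handled by locating a free subgroup of rank two, hence a non-finitely-generated subgroup, and invoking Theorem~\ref{thm:Max}. Where you genuinely differ is in how the free subgroup is produced, and here your extra care pays off: the paper simply picks $g \in G\setminus\{1\}$ and distinct $h,h' \in H\setminus\{1\}$ and asserts that $gh, gh'$ freely generate a rank-two free group, but this is false in general --- one always has $(gh)^{-1}(gh') = h^{-1}h'$, a nontrivial element of $H$, so whenever $H$ is finite the subgroup $\langle gh, gh'\rangle$ contains torsion and cannot be free (in $\Z/2\Z * \Z/3\Z$ with $h'=h^2$ these two elements generate the entire group). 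The paper's conclusion survives because $G*H$ does contain $F_2$ whenever the factors are not both of order two, but a correct witness requires either passing to commutators, a ping-pong argument, or an argument like yours. Your route --- embedding $\Z/m\Z * \Z/n\Z$ via one cyclic subgroup from each factor, isolating the degenerate $(2,2)$ case, and covering the elementary abelian $2$-group situation with $(\Z/2\Z)^2 * \Z/2\Z$ --- is complete and classical, and is in fact the more reliable version of this step; the one point worth writing out fully is the standard fact that a free product of two finite cyclic groups other than $\Z/2\Z * \Z/2\Z$ contains $F_2$ (e.g.\ via the free kernel of the map to the direct product, of rank $(m-1)(n-1)$).
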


\begin{proof}
The case $|G| = |H| = 2$ corresponds to the infinite dihedral group, which is polycyclic and thus covered by Proposition~\ref{prop:VirtSol} below. When one of the groups has three generators, we pick $g \in G \setminus \{1_G\}$ and $h, h' \in H \setminus \{1_H\}$ with $h \neq h'$, and then $gh, gh'$ are easily seen to generate a free group with two generators using the normal form theorem of free products \cite{LySc15}. The free group with two generators contains a copy of the free group with infinitely many generators, and thus does not satisfy the maximal condition on subgroups, and we apply Theorem~\ref{thm:Max}.
\end{proof}

\begin{proposition}
A wreath product $G \wr H$ with $G$ nontrivial and $H$ infinite never satisfies the maximal condition on subgroups, thus never has the weak Markov property.
\end{proposition}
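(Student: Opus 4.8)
The plan is to invoke Theorem~\ref{thm:Max}, so it suffices to exhibit a subgroup of $G \wr H$ that is not finitely-generated. Recall that the wreath product $G \wr H$ has underlying group $\left(\bigoplus_{h \in H} G\right) \rtimes H$, where $H$ permutes the coordinates of the \emph{base group} $B = \bigoplus_{h \in H} G$ by translation. The obvious candidate for a non-finitely-generated subgroup is the base group $B$ itself.

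First I would observe that because $G$ is nontrivial, pick any nonidentity element $g \in G \setminus \{1_G\}$, and for each $h \in H$ let $e_h \in B$ denote the element of the direct sum that equals $g$ in coordinate $h$ and $1_G$ elsewhere. Since $H$ is infinite, the set $\{e_h : h \in H\}$ is infinite. I would then argue that $B$ is not finitely-generated: any finite subset $F \subset B$ has elements supported on a finite union of coordinates, hence the subgroup $\langle F \rangle$ is contained in $\bigoplus_{h \in S} G$ for some finite $S \subset H$, which is a proper subgroup of $B$ as $H$ is infinite (there is some $h \notin S$, and $e_h \notin \bigoplus_{h' \in S} G$). Thus $B$ is a subgroup of $G \wr H$ that is not finitely-generated.

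By the Lemma characterizing the maximal condition in terms of finitely-generated subgroups, $G \wr H$ does not satisfy the maximal condition on subgroups. Theorem~\ref{thm:Max} then produces a group shift on $G \wr H$ that is not of finite type, so $G \wr H$ does not have the weak Markov property. This completes the argument.

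I do not expect any genuine obstacle here: the only point requiring care is the bookkeeping of supports, namely confirming that a finitely-generated subgroup of the restricted (direct sum) base group lives inside finitely many coordinates. This is immediate from the definition of the direct sum, where every element has finite support, so a finite generating set touches only finitely many coordinates and cannot reach the infinitely many remaining ones. It is worth noting that the restricted wreath product (direct sum rather than direct product in the base) is what makes $B$ countable-dimensional and transparently non-finitely-generated; if instead the unrestricted product were intended, one would argue via the same increasing chain $\bigoplus_{h \in S} G$ over an increasing exhaustion of $H$ by finite sets $S$, which still fails to stabilize.
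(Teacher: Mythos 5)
Your argument is correct and follows essentially the same route as the paper: both proofs locate a non-finitely-generated subgroup inside the base group of the wreath product (the paper phrases it as the direct union $G^\omega$ generated by the commuting conjugates $hGh^{-1}$, you take the full direct sum $\bigoplus_{h \in H} G$ and note that a finite generating set has finite total support) and then invoke Theorem~\ref{thm:Max}. The extra bookkeeping you include is sound and nothing is missing.
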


\begin{proof}
When $H$ is infinite, $G^\omega$ (the direct union of $G^d$ as $d \rightarrow \infty$) is contained in $G \wr H$, as the conjugates $hGh^{-1}$ commute and have trivial intersection for distinct $h \in H$. The group $G^\omega$ is obviously never finitely generated when $G$ is non-trivial, and we apply Theorem~\ref{thm:Max}.
\end{proof}

\begin{proposition}
If $G$ is uncountable, then it does not have the weak Markov property.
\end{proposition}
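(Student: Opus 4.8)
The plan is to reduce immediately to the machinery already in place, since the crux is a simple cardinality observation. The key point is that a finitely-generated group is a quotient of a free group on finitely many generators, and such a free group is countable; hence every finitely-generated group is countable. Therefore an uncountable group $G$ cannot be finitely-generated.

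Given this, I would apply Lemma~\ref{lem:NonSFTGroupShift} directly to $G$ itself (taking the subgroup in question to be all of $G$). Since $G$ is not finitely-generated, the lemma says that the trivial action of $G$ on $\Z/2\Z$, realized as the group shift $\{0^G, 1^G\} \subset (\Z/2\Z)^G$, is a group shift that is not of finite type. This exhibits a group shift on $G$ failing to be SFT, which by definition means $G$ does not have the weak Markov property. Equivalently, one may phrase it via the subgroup characterization: since $G$ is a non-finitely-generated subgroup of itself, $G$ fails the maximal condition on subgroups by the Lemma characterizing that condition, and then Theorem~\ref{thm:Max} produces a non-SFT group shift.

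I do not expect any real obstacle here; indeed the introductory remark already flags that ``the case of uncountable groups is somewhat trivial,'' and this is exactly that triviality. The only thing to be careful about is to justify that finitely-generated implies countable, so that uncountability genuinely forces failure of finite generation; everything else is a direct citation of the preceding lemmas. If one wanted to be completely self-contained, the single line is that the finitely-generated subgroup $\langle S \rangle$ with $|S| < \infty$ consists of all finite words in $S \cup S^{-1}$, of which there are countably many.
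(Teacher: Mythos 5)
Your proof is correct and follows essentially the same route as the paper: both reduce to the fact that a finitely-generated group is countable and then invoke the earlier machinery (Lemma~\ref{lem:NonSFTGroupShift} / Theorem~\ref{thm:Max}). The only cosmetic difference is that the paper constructs a countable non-finitely-generated subgroup $\langle g_1, g_2, \ldots \rangle$ with $g_{i+1} \notin \langle g_1, \ldots, g_i \rangle$ before applying Theorem~\ref{thm:Max}, whereas you observe directly that $G$ itself is not finitely generated, which is an equally valid (and slightly more direct) instance of the same argument.
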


\begin{proof}
Pick a countable sequence $g_1, g_2, g_3, ...$ such that $g_{i+1} \notin \langle g_1, g_2..., g_i \rangle$ for all $i$, which can be done since finitely-generated subgroups are countable. Then $\langle g_1, g_2, \ldots \rangle$ is not finitely-generated since it admits an infinite generating set having no finite generating subset, so this follows from Theorem~\ref{thm:Max}.
\end{proof}

For the main theorem, we will use a theorem of Osin, but while covering the virtually polycyclic case, we give a self-contained proof of the virtually solvable case in Proposition~\ref{prop:VirtSol}. 
The following is classical, see Theorem~5.4.12 of \cite{Ro96}.

\begin{lemma}
A solvable group is polycyclic if and only if it satisfies the maximal condition on subgroups.
\end{lemma}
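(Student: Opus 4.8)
The plan is to prove both implications, leaning throughout on the characterization established just above that $G$ satisfies the maximal condition on subgroups if and only if every subgroup of $G$ is finitely generated. Note that solvability is needed only for the ``if'' direction, since polycyclic groups are automatically solvable; so the ``only if'' direction I would prove for arbitrary polycyclic $G$.

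For the forward implication (polycyclic $\Rightarrow$ Max) the key step is to show that every subgroup $H$ of a polycyclic group $G$ is again polycyclic. Given a cyclic series $1 = G_0 \triangleleft G_1 \triangleleft \cdots \triangleleft G_n = G$, I would consider the intersections $H \cap G_i$, which form a subnormal series of $H$ (each $H \cap G_i$ is normal in $H \cap G_{i+1}$ since $G_i \triangleleft G_{i+1}$). The factor $(H \cap G_{i+1})/(H \cap G_i)$ embeds into the cyclic group $G_{i+1}/G_i$ via the map induced by inclusion, whose kernel is exactly $(H \cap G_{i+1}) \cap G_i = H \cap G_i$; since subgroups of cyclic groups are cyclic, each such factor is cyclic. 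As a polycyclic group is finitely generated — take preimages of generators of each cyclic factor — every subgroup of $G$ is finitely generated, and the characterization yields Max.

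For the reverse implication I would use the derived series $G = G^{(0)} \triangleright G^{(1)} \triangleright \cdots \triangleright G^{(d)} = 1$ of the solvable group $G$, whose factors are abelian. Each $G^{(i)}$ is a subgroup of $G$, hence finitely generated by Max and the characterization, so each factor $G^{(i)}/G^{(i+1)}$ is a finitely generated abelian group, therefore a finite direct sum of cyclic groups, and in particular polycyclic. Refining each factor by a finite series with cyclic quotients and pulling these back through the correspondence theorem refines the derived series into a subnormal series of $G$ all of whose factors are cyclic, exhibiting $G$ as polycyclic.

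Since both directions reduce to the given equivalence between Max and finite generation of all subgroups, there is no single hard obstacle; the points requiring care are the two structural steps: verifying that the inclusion-induced maps on factors are genuine embeddings into cyclic groups in the forward direction, and checking that subnormality and cyclicity of factors survive the correspondence-theorem pullback in the reverse direction, both of which are routine given the abelianness of the relevant factors.
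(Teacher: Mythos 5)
Your proof is correct. Note, though, that the paper does not actually prove this lemma at all: it states it as classical and cites Theorem~5.4.12 of Robinson's \emph{A Course in the Theory of Groups}, so there is no in-paper argument to compare against. What you have written is essentially the standard textbook proof of that cited result, and it is complete. Both of your structural steps check out: in the forward direction, $(H \cap G_{i+1})/(H \cap G_i)$ embeds into $G_{i+1}/G_i$ by the first isomorphism theorem applied to the composite $H \cap G_{i+1} \hookrightarrow G_{i+1} \to G_{i+1}/G_i$, whose kernel is $H \cap G_i$, so every subgroup of a polycyclic group is polycyclic and in particular finitely generated; in the reverse direction, each derived subgroup $G^{(i)}$ is finitely generated by Max, so each abelian factor $G^{(i)}/G^{(i+1)}$ is a finitely generated abelian group and its cyclic decomposition pulls back (the intermediate terms are even normal in $G^{(i)}$, since every subgroup of an abelian quotient is normal) to refine the derived series into a subnormal series with cyclic factors. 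Your reliance on the paper's preceding lemma (Max is equivalent to all subgroups being finitely generated) is legitimate, since that lemma is proved in the paper before this one is invoked. The only stylistic remark is that your argument is self-contained where the paper deliberately is not; that is a feature, not a flaw.
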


\begin{lemma}
\label{lem:VirtualVersion}
A virtually solvable group is virtually polycyclic if and only if it satisfies the maximal condition on subgroups.
\end{lemma}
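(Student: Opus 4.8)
The plan is to reduce both implications to the immediately preceding lemma (a solvable group is polycyclic if and only if it satisfies the maximal condition on subgroups) by controlling how the maximal condition behaves under passage to, and from, finite-index subgroups. Throughout I would use freely the earlier characterization that a group satisfies the maximal condition on subgroups exactly when all of its subgroups are finitely generated.

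First I would record two standard facts about finite-index subgroups. (i) If $K \leq G$ and $H \leq G$ has finite index $n$, then $K \cap H$ has index at most $n$ in $K$ (the $K$-cosets of $K \cap H$ inject into the $G$-cosets of $H$). (ii) If a subgroup $H \leq G$ has finite index and is finitely generated, then $G$ is finitely generated (a coset-transversal / Reidemeister--Schreier argument). From these I derive the transfer statement I actually need: if $H \leq G$ has finite index and $H$ satisfies the maximal condition, then so does $G$. Indeed, given an arbitrary $K \leq G$, the intersection $K \cap H$ is a subgroup of $H$, hence finitely generated since $H$ satisfies the maximal condition, and it has finite index in $K$ by (i); by (ii), $K$ is then finitely generated. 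As $K$ was arbitrary, every subgroup of $G$ is finitely generated, i.e.\ $G$ satisfies the maximal condition.

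With the transfer statement in hand, the forward direction is immediate. Suppose $G$ is virtually polycyclic and let $P$ be a finite-index polycyclic subgroup. A polycyclic group is solvable and satisfies the maximal condition by the preceding lemma, so the transfer statement yields that $G$ satisfies the maximal condition. For the converse, suppose $G$ is virtually solvable and satisfies the maximal condition, and let $S$ be a finite-index solvable subgroup. Every subgroup of $S$ is in particular a subgroup of $G$ and hence finitely generated, so $S$ itself satisfies the maximal condition; by the preceding lemma $S$ is polycyclic, whence $G$ is virtually polycyclic. Note that this direction does not even require the transfer statement, only the trivial observation that a subgroup of a group satisfying the maximal condition again satisfies it.

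The only nonroutine ingredient is fact (ii), that a group with a finitely generated finite-index subgroup is itself finitely generated; this is classical, but it is where the genuine content of the ``virtually'' upgrade resides. Everything else is bookkeeping around the preceding lemma, so I expect the write-up to be short once (ii) is cited.
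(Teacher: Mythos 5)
Your proposal is correct and follows essentially the same route as the paper: the forward direction via the observation that $[K : K \cap H] \leq [G:H]$ together with the fact that a group with a finitely generated finite-index subgroup is finitely generated, and the converse by noting that subgroups of the finite-index solvable subgroup are subgroups of $G$ and then invoking the preceding lemma. No gaps.
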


\begin{proof}
Suppose $G$ is virtually solvable. Suppose first it is virtually polycyclic, and let $H \leq G$ be a finite-index polycyclic subgroup. If $K \leq G$, then $[K : K \cap H] \leq [G : H] < \infty$. Since $K \cap H$ is finitely generated as a subgroup of $H$, so is $K$, by adding finitely many coset representatives to a generating set of $K \cap H$. Thus every subgroup of $G$ is finitely generated.

Suppose then that $G$ satisfies the maximal condition on subgroups, and let $K \leq G$ be a solvable group of finite index. It is enough to show that $K$ is polycyclic. Suppose it is not. Then there is a subgroup $H \leq K$ which is not finitely generated, and then also $H \leq G$, a contradiction.
\end{proof}

The following is Definition~4.1 in \cite{Sc95}.

\begin{definition}
A countable group $G$ is of \emph{Markov type} if, for every compact Lie group $\Sigma$, the shift-action of $G$ on $\Sigma^G$ satisfies the descending chain condition.
\end{definition}

In \cite{Sc95}, Lie groups are defined to be compact matrix groups over the complex numbers. In particular, every finite group with the discrete topology is a Lie group. For the argument, we do not need to know what the descending chain condition, or \emph{d.c.c.}, means, but it refers to that for the partially ordered set of shift-invariant closed subgroups. The following is Theorem~4.2 of \cite{Sc95}.

\begin{theorem}
\label{thm:PolyMarkov}
If $G$ is polycyclic-by-finite, then it is of Markov type.
\end{theorem}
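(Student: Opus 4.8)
The plan is to prove the statement by induction on the structure of $G$, showing that the class of groups of Markov type contains the trivial group and is closed under extensions by $\Z$ and by finite groups. Since a polycyclic-by-finite group is virtually poly-$\Z$ (it has a finite-index normal subgroup admitting a subnormal series with all quotients infinite cyclic), these closure properties suffice: one builds the poly-$\Z$ subgroup $N$ from the trivial group by finitely many $\Z$-extensions, and then passes from $N$ to $G$ by a single finite-extension step. For the base case one checks that the trivial group is of Markov type, i.e. that a compact Lie group $\Sigma$ satisfies the descending chain condition on closed subgroups: dimension is a non-increasing nonnegative-integer invariant along a descending chain of closed subgroups, so it stabilizes; once the dimension is constant the identity components coincide (a connected closed subgroup of a connected closed subgroup of equal dimension is the whole thing), and the finite component groups then form a descending chain of finite groups, which stabilizes.

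For the finite-extension step, suppose $N \trianglelefteq G$ is of Markov type with $[G:N]=k<\infty$. Writing $G=\bigsqcup_{j=1}^k t_j N$, the restriction of the shift $G$-action to $N$ identifies $\Sigma^G$ with $(\Sigma^{G/N})^N$: since $N$ is normal it fixes each coset setwise, so $N$ acts coordinatewise, twisting the $j$-th coordinate by the automorphism $n\mapsto t_j^{-1} n t_j$ of $N$. Untwisting each coordinate by the corresponding relabelling of the index set $N$ gives a shift-commuting group isomorphism onto the \emph{standard} $N$-shift on $(\Sigma^{G/N})^N$. As $\Sigma^{G/N}=\Sigma^k$ is again a compact Lie group and $N$ is of Markov type, the closed $N$-invariant subgroups of $\Sigma^G$ satisfy the d.c.c.; a descending chain of closed $G$-invariant subgroups is in particular such a chain, so it stabilizes and $G$ is of Markov type.

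The heart of the argument, and the step I expect to be the main obstacle, is the $\Z$-extension: if $N\trianglelefteq G$ is of Markov type and $G/N\cong\Z$, show $G$ is of Markov type. Fixing $t$ mapping to a generator, one identifies $\Sigma^G$ with $(\Sigma^N)^{\Z}$ so that $t$ acts as the ordinary shift of the $\Z$-index while $N$ acts coordinatewise (twisted by conjugation by powers of $t$). The same untwisting as above shows that for each finite window $I\subset\Z$ the projection of a closed $G$-invariant subgroup to $(\Sigma^N)^I\cong(\Sigma^{|I|})^N$ is a closed $N$-invariant subgroup and that these window spaces are Markov-type $N$-systems; hence for each fixed window the projections of a descending chain stabilize. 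Moreover, by compactness and $t$-invariance every closed $G$-invariant subgroup is the inverse limit of its window projections, so it is determined by them. The remaining, genuinely delicate point is to upgrade this per-window stabilization to global stabilization, i.e. to bound the needed window length uniformly along the chain. The transparent way to do this is the dual-module picture: for abelian $\Sigma$ the Pontryagin dual of $\Sigma^G$ is a finitely generated module over $\Z[G]$, closed invariant subgroups correspond order-reversingly to submodules, and P. Hall's theorem that $\Z[G]$ is Noetherian for polycyclic-by-finite $G$ (see \cite{Ro96}) converts the d.c.c. on subgroups into the a.c.c. on submodules. The obstacle is that $\Sigma$ is an arbitrary compact Lie group and this duality is unavailable when $\Sigma$ is non-abelian; I would handle this either through the structure of $\Sigma$ (reducing along its finite component group and its connected part) or, self-containedly, by showing that the ``memory'' subgroups $\ker\big(X_{[0,\ell]}\to X_{[0,\ell-1]}\big)\le\Sigma^N$ stabilize in $\ell$ via the Markov type of $N$ and that their stabilization length can be controlled along the descending chain, forcing a uniform window and hence global d.c.c.

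Combining the three steps, a polycyclic-by-finite group $G$ has a finite-index normal poly-$\Z$ subgroup $N$, which is of Markov type by iterating the $\Z$-extension step from the trivial group, and then $G$ is of Markov type by the finite-extension step. I expect the base case and the finite-extension step to be routine, the per-window reduction in the $\Z$-step to be straightforward given the Markov type of $N$, and the passage from local to global d.c.c. in the $\Z$-step, where Hall's Noetherian theorem does the real work in the abelian model, to be the crux of the whole proof.
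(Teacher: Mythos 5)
The paper does not prove this statement at all: it is imported verbatim as Theorem~4.2 of Schmidt's work \cite{Sc95}, so there is no ``paper proof'' to match and your attempt has to stand on its own. Your skeleton is the right one, and it is essentially the skeleton of the known proof: a polycyclic-by-finite group has a normal poly-$\Z$ subgroup of finite index, the trivial group is of Markov type because a compact Lie group satisfies the d.c.c.\ on closed subgroups (dimension, then identity component, then component group), the finite-index step is a routine restriction of the $G$-action to an $N$-action on $\Sigma^G\cong(\Sigma^{[G:N]})^N$ (incidentally, if you index by \emph{right} cosets $Ns_j$ the left $N$-action is already the standard shift and no untwisting by conjugation is needed), and for abelian $\Sigma$ the Pontryagin dual of $\Sigma^G$ is the finitely generated $\Z[G]$-module $\widehat{\Sigma}\otimes_\Z \Z[G]$, so P.~Hall's theorem that $\Z[G]$ is Noetherian for polycyclic-by-finite $G$ gives the d.c.c.\ directly --- note that once you invoke Hall for $\Z[G]$ itself, the $\Z$-by-$\Z$-by-$\cdots$ induction is redundant in the abelian case.

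The genuine gap is exactly where you flag it: the non-abelian compact Lie group $\Sigma$. Neither of your two proposed fixes is carried out, and neither is automatic. Reducing ``along the finite component group and the connected part'' is not a formality, because a closed shift-invariant subgroup $X\le\Sigma^G$ need not be an extension of a subgroup of $(\Sigma^0)^G$ by a subgroup of $(\Sigma/\Sigma^0)^G$ in any way that lets you run two separate chain arguments and recombine them; one has to analyze the chains $X_i\cap(\Sigma^0)^G$ and their images, and the connected non-abelian part (central torus times semisimple part modulo finite center) requires its own argument since Pontryagin duality only handles the torus. Your second route begs the question: for a single $X$ the memory kernels $\ker\bigl(X_{[0,\ell]}\to X_{[0,\ell-1]}\bigr)$ do stabilize by the Markov type of $N$, but ``controlling the stabilization length along the descending chain'' is precisely the uniformity that the d.c.c.\ asserts, so stating it is not proving it. Note also that even for the application in this paper, where only the weak Markov property (finite alphabets) is needed, a finite non-abelian $\Sigma$ already falls outside your duality argument, so the gap is not confined to an exotic corner of the statement. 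As written, the proposal is a correct and well-organized plan whose crux step is identified but not closed.
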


The following is Theorem~3.8 of \cite{Sc95}.

\begin{theorem}
\label{thm:DCCSFT}
Let $G$ be a countable group, and let $\Sigma$ be a compact Lie group. The shift action of $G$ on $\Sigma^G$ satisfies the d.c.c. if and only if every closed shift-invariant subgroup of $\Sigma^G$ is of finite type.
\end{theorem}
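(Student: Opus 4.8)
The plan is to prove both implications by confronting a closed shift-invariant subgroup with its finite-window over-approximations, exchanging intersections with restrictions by compactness, and then invoking the descending chain condition for closed subgroups of a compact Lie group in the harder direction. Throughout I read ``of finite type'' in the sense appropriate to a compact-group alphabet: $X$ is of finite type if there is a finite window $A \subseteq G$ with $x \in X \iff \forall g \in G: (g \cdot x)|_A \in X|_A$ (for a group shift one may always take the constraint set to be $X|_A$ itself, which here is closed but need not be open).

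For the forward direction (d.c.c.\ $\Rightarrow$ every closed shift-invariant subgroup is of finite type), I would fix such a subgroup $X \leq \Sigma^G$ and, using that $G$ is countable, choose finite sets $A_1 \subseteq A_2 \subseteq \cdots$ with $\bigcup_n A_n = G$. For each $n$ I set $X^{(n)} = \{x \in \Sigma^G : \forall g \in G,\ (g\cdot x)|_{A_n} \in X|_{A_n}\}$. Each $X^{(n)}$ is a closed shift-invariant subgroup — the restriction $X|_{A_n} = \pi_{A_n}(X)$ is a compact subgroup of $\Sigma^{A_n}$, and $X^{(n)}$ is the intersection of the shifts of its preimage — and by construction it is of finite type with window $A_n$. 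The sequence decreases, and I would verify $\bigcap_n X^{(n)} = X$: any $x$ in the intersection satisfies $x|_{A_n} \in X|_{A_n}$ for all $n$ (take $g = 1$), so $x|_B \in X|_B$ for every finite $B$, and since $X$ is closed this forces $x \in X$; the reverse inclusion is immediate. Now d.c.c.\ applies to $X^{(1)} \supseteq X^{(2)} \supseteq \cdots$, so the chain stabilizes at some $X^{(N)}$, whence $X = X^{(N)}$ is of finite type.

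For the reverse direction (finite type $\Rightarrow$ d.c.c.), I would take a descending chain $X_1 \supseteq X_2 \supseteq \cdots$ with intersection $X$, which by hypothesis is of finite type, say with finite window $A$, so $x \in X \iff \forall g: (g\cdot x)|_A \in X|_A$. First, compactness gives $\bigcap_i X_i|_A = X|_A$: a pattern lying in every $X_i|_A$ is extended inside each $X_i$ by a nonempty nested family of closed sets, which therefore has a common point, necessarily in $X$. The crucial step is that $X_1|_A \supseteq X_2|_A \supseteq \cdots$ is a descending chain of closed subgroups of the compact Lie group $\Sigma^A$, and such groups satisfy the descending chain condition on closed subgroups, so $X_i|_A = X|_A$ for all $i \geq N$. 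Finally the finite-type description upgrades this window equality to $X_i = X$: for $i \geq N$, any $x \in X_i$ and any $g$ give $(g\cdot x)|_A \in X_i|_A = X|_A$ by shift-invariance, so $x \in X$, and thus the chain stabilizes.

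I expect the main obstacle to be the fact underlying the reverse direction, namely that a compact Lie group satisfies the descending chain condition on its closed subgroups. I would prove this via the invariant $(\dim H, \#\pi_0(H))$: along a descending chain of closed subgroups the dimensions are non-increasing non-negative integers and so stabilize; once the dimension is constant the identity components coincide (a connected Lie subgroup contained in another of the same dimension is open in it, hence equal); and then the finite quotients by the common identity component form a descending chain of finite groups, which stabilizes. This is exactly where compact-Lie-ness of $\Sigma$ is indispensable: for a general compact $\Sigma$, such as a profinite group, a single finite window $\Sigma^A$ already carries infinite strictly descending chains of closed subgroups, and the equivalence breaks down.
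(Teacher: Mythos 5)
The paper does not actually prove this statement---it is quoted as Theorem~3.8 of \cite{Sc95} with the remark that the general definition of finite type is omitted---so there is no in-paper argument to compare against. Your proof is correct, complete, and is essentially the standard argument from the source. In the forward direction, the decreasing chain of window over-approximations $X^{(1)} \supseteq X^{(2)} \supseteq \cdots$ with $\bigcap_n X^{(n)} = X$ is exactly the right device; the one step you pass over with ``by construction'' deserves a line, namely that $X^{(n)}|_{A_n} = X|_{A_n}$ (the inclusion $X^{(n)}|_{A_n} \subseteq X|_{A_n}$ comes from taking $g = 1$ in the defining condition), after which the definition of $X^{(n)}$ is literally the finite-type condition for $X^{(n)}$ with window $A_n$. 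In the reverse direction, the compactness argument identifying $\bigcap_i X_i|_A$ with $X|_A$, the descending chain condition for closed subgroups of the compact Lie group $\Sigma^A$ via the invariant $(\dim H, \#\pi_0(H))$, and the upgrade from $X_i|_A = X|_A$ to $X_i = X$ using the finite-type window of $X$ are all sound. Your closing observation correctly isolates the Lie hypothesis as the place where the reverse implication would fail for, say, profinite alphabets, while the forward direction needs only compactness.
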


This definition of finite type restricts to ours in the zero-dimensional setting, and we omit the general definition.

\begin{lemma}
\label{lem:MarkovGSFT}
Let $G$ be a countable group. If it is of Markov type, it has the weak Markov property.
\end{lemma}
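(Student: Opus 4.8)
The plan is to unwind the definitions and then invoke Theorem~\ref{thm:DCCSFT} directly, since the lemma is essentially a translation between the two frameworks. Suppose $G$ is a countable group of Markov type, and let $X \subset \Sigma^G$ be an arbitrary group shift; by definition $\Sigma$ is a finite group, and $X$ is simultaneously a topologically closed, shift-invariant subset and a subgroup of $\Sigma^G$ under cellwise operations. First I would observe that a finite group, carried with the discrete topology, is a compact Lie group in the sense used in \cite{Sc95} (the excerpt records exactly this remark). Hence $\Sigma^G$ falls within the scope of the Markov type hypothesis, and by definition the shift-action of $G$ on $\Sigma^G$ satisfies the d.c.c. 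Note that the countability of $G$ is exactly what lets us apply both the Markov type definition and Theorem~\ref{thm:DCCSFT}.

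Next I would note that $X$ is precisely a closed shift-invariant subgroup of $\Sigma^G$: closedness and shift-invariance are part of being a subshift, while being a subgroup is the extra condition in the definition of a group shift. Applying Theorem~\ref{thm:DCCSFT} with this $\Sigma$, the d.c.c. for the shift-action guarantees that every closed shift-invariant subgroup of $\Sigma^G$ is of finite type; in particular $X$ is of finite type. As $X$ was arbitrary, every group shift on $G$ is of finite type, which is exactly the weak Markov property.

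The one point requiring care, and the only real obstacle, is a bookkeeping issue about the meaning of \emph{finite type}. The notion appearing in Theorem~\ref{thm:DCCSFT} is a priori Schmidt's general one for closed subgroups of $\Sigma^G$ with $\Sigma$ a compact Lie group, whereas the weak Markov property is phrased in terms of the zero-dimensional notion used throughout this paper. Since $\Sigma$ is finite, $\Sigma^G$ is zero-dimensional, and the excerpt already asserts that the general definition restricts to ours in this case; thus the two notions coincide for our $X$ and the conclusion transfers verbatim. Once this coincidence and the identifications above are in place — group shifts as closed shift-invariant subgroups, and finite groups as compact Lie groups — the statement is an immediate corollary of Theorem~\ref{thm:DCCSFT}, with no further computation needed.
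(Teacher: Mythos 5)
Your proof is correct and follows essentially the same route as the paper's: represent the finite group $\Sigma$ as a compact Lie group in Schmidt's sense, invoke the d.c.c. from the definition of Markov type, and conclude via Theorem~\ref{thm:DCCSFT}. Your extra remark about the two notions of finite type coinciding in the zero-dimensional setting is the same point the paper makes immediately after stating Theorem~\ref{thm:DCCSFT}.
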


\begin{proof}
Let $X \subset \Sigma^G$ be a group shift. Then $\Sigma$ can be represented by permutation matrices, and thus is a compact Lie group in the sense of \cite{Sc95}. If $G$ has the Markov property, then $\Sigma^G$ satisfies the d.c.c. by definition, and thus $X$ is of finite type by Theorem~\ref{thm:DCCSFT}. This proves the weak Markov property.
\end{proof}

\begin{proposition}
\label{prop:VirtSol}
Let $G$ be virtually solvable. Then $G$ has the weak Markov property if and only if it is virtually polycyclic.
\end{proposition}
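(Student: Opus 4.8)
The plan is to establish the biconditional by proving the two implications independently, each by invoking results already in hand; the whole argument should be short. A preliminary observation I would record is that a virtually polycyclic group is countable (a polycyclic group is finitely generated, hence countable, and a finite extension of a countable group is countable), so that the Markov-type results of \cite{Sc95}, which are stated for countable groups, apply to it. I would also note that ``virtually polycyclic'' coincides with ``polycyclic-by-finite'' in the sense used in Theorem~\ref{thm:PolyMarkov}.

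For the direction ``virtually polycyclic $\Rightarrow$ weak Markov property,'' I would observe that since $G$ is polycyclic-by-finite, Theorem~\ref{thm:PolyMarkov} tells us $G$ is of Markov type. As $G$ is countable, Lemma~\ref{lem:MarkovGSFT} then yields directly that $G$ has the weak Markov property, with nothing further to check.

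For the converse, suppose $G$ has the weak Markov property. As noted after the definition of the weak Markov property, Theorem~\ref{thm:Max} says precisely that such a group satisfies the maximal condition on subgroups (the contrapositive being that failing the maximal condition produces a non-SFT group shift). Since $G$ is assumed virtually solvable, I would then apply Lemma~\ref{lem:VirtualVersion}, which identifies virtual polycyclicity with the maximal condition on subgroups within the class of virtually solvable groups, to conclude that $G$ is virtually polycyclic.

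I do not anticipate a genuine obstacle, as both halves are immediate combinations of cited statements; the only point deserving a moment's attention is confirming the countability hypothesis required by Lemma~\ref{lem:MarkovGSFT}, which, as noted above, holds automatically for virtually polycyclic groups, so no separate treatment of the uncountable case is needed here.
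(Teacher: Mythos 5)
Your proof is correct and follows essentially the same route as the paper: Theorem~\ref{thm:PolyMarkov} plus Lemma~\ref{lem:MarkovGSFT} for the forward direction, and Theorem~\ref{thm:Max} combined with Lemma~\ref{lem:VirtualVersion} (in contrapositive form) for the converse. Your explicit check of the countability hypothesis in Lemma~\ref{lem:MarkovGSFT} is a reasonable extra precaution that the paper leaves implicit.
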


\begin{proof}
If $G$ is virtually polycyclic, then it is in particular polycyclic-by-finite (by intersecting all conjugates of the polycyclic subgroup of finite index), so by Theorem~\ref{thm:PolyMarkov} it has the Markov property, so by Lemma~\ref{lem:MarkovGSFT} it has the weak Markov property. If $G$ is virtually solvable but not virtually polycyclic, then by Lemma~\ref{lem:VirtualVersion} it does not satisfy the maximal condition on subgroups, and thus does not have the weak Markov property by Theorem~\ref{thm:Max}.
\end{proof}


We now prove Theorem~\ref{thm:WeakTits} (in slightly stronger form). The following follows by combining Proposition~4.5 and Example 4.8 (3) of \cite{Sc95}, and is the Markov type version of our construction in Theorem~\ref{thm:Max}.

\begin{lemma}
If $G$ is of Markov type and $H \leq G$, then $H$ is of Markov type. If a group is of Markov type, then it satisfies the maximal condition on subgroups.
\end{lemma}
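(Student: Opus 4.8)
The plan is to prove both assertions by contraposition, using Theorem~\ref{thm:DCCSFT} to convert the descending chain condition into the statement that every closed shift-invariant subgroup of $\Sigma^G$ is of finite type, and then transporting non-finite-type group shifts between $G$ and $H$ by means of the coset extension.

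For the second assertion, I would suppose toward a contradiction that $G$ is of Markov type but does not satisfy the maximal condition on subgroups. Then Theorem~\ref{thm:Max} produces a group shift $X \subset \Sigma^G$ with $\Sigma$ a finite group such that $X$ is not of finite type. As noted in the proof of Lemma~\ref{lem:MarkovGSFT}, a finite group embeds into the permutation matrices and is therefore a compact Lie group in the sense of \cite{Sc95}, so the Markov type hypothesis applies to this particular $\Sigma$ and yields the d.c.c. for the $G$-action on $\Sigma^G$. By Theorem~\ref{thm:DCCSFT} every closed shift-invariant subgroup of $\Sigma^G$ is then of finite type; but $X$ is such a subgroup and is not of finite type, a contradiction. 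Hence $G$ satisfies the maximal condition on subgroups.

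For the first assertion, suppose $H \leq G$ is not of Markov type (note $H$ is countable, being a subgroup of the countable group $G$). Then some compact Lie group $\Sigma$ witnesses the failure of the d.c.c. for the $H$-action on $\Sigma^H$, so by Theorem~\ref{thm:DCCSFT} there is a closed shift-invariant subgroup $Y \subset \Sigma^H$ that is not of finite type. I would then form the coset extension $Y^{G/H} \subset \Sigma^G$: for finite $\Sigma$, Lemma~\ref{lem:CosetExtGroupShift} shows it is again a group shift, while Lemma~\ref{lem:SFTIff} (in the direction ``$X$ SFT implies $Y$ SFT'', read contrapositively) shows that it fails to be of finite type because $Y$ does. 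Thus $Y^{G/H}$ is a closed shift-invariant subgroup of $\Sigma^G$ that is not of finite type, so Theorem~\ref{thm:DCCSFT} makes the d.c.c. fail for the $G$-action on $\Sigma^G$, and $G$ is not of Markov type.

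The main obstacle is that our coset-extension apparatus (Lemmas~\ref{lem:SFTIff} and~\ref{lem:CosetExtGroupShift}) was developed only for finite alphabets $\Sigma$, whereas Markov type quantifies over all compact Lie groups, and the general, possibly positive-dimensional, notion of finite type was left unstated. The step that genuinely requires care is therefore checking that the coset extension of a closed shift-invariant subgroup is again such a subgroup and preserves non-finite-typeness for an arbitrary compact Lie $\Sigma$; this is exactly what Proposition~4.5 and Example~4.8(3) of \cite{Sc95} supply, and invoking them completes the argument.
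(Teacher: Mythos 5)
Your argument is correct and follows essentially the route the paper itself indicates: the paper gives no proof of this lemma beyond citing Proposition~4.5 and Example~4.8(3) of \cite{Sc95} and remarking that it is the ``Markov type version'' of the construction in Theorem~\ref{thm:Max}. Your derivation of the second claim is in fact fully self-contained within the paper's own machinery (Theorem~\ref{thm:Max} combined with Theorem~\ref{thm:DCCSFT}, i.e.\ the argument of Lemma~\ref{lem:MarkovGSFT} run in the contrapositive direction), which is a modest improvement over the bare citation. For the first claim you correctly isolate the only genuine issue---Lemmas~\ref{lem:SFTIff} and~\ref{lem:CosetExtGroupShift} are stated and proved only for finite alphabets, whereas Markov type quantifies over all compact Lie groups and the general notion of finite type is never defined in the paper---and your resolution, deferring the positive-dimensional case to Schmidt's Proposition~4.5, is precisely what the paper does.
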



\begin{definition}
The class of \emph{elementary amenable groups} is the smallest class containing all finite and abelian groups, which is closed under subgroups, quotients, extensions and directed unions.
\end{definition}

The following is Theorem~2.1 in \cite{Os02}.

\begin{lemma}
\label{lem:EAGMAX}
An elementary amenable group satisfies the maximal condition on subgroups if and only if it is virtually polycyclic.
\end{lemma}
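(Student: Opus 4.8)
The forward direction is immediate from the earlier results: a virtually polycyclic group is in particular virtually solvable, so Lemma~\ref{lem:VirtualVersion} shows that it satisfies the maximal condition on subgroups. The content lies in the converse, that an elementary amenable group satisfying the maximal condition is virtually polycyclic, and the plan is a transfinite induction along the standard stratification of the elementary amenable class.

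Concretely, I would work with Chou's hierarchy $(\mathcal{E}_\alpha)_\alpha$: let $\mathcal{E}_0$ be the class of finite and abelian groups, set $\mathcal{E}_\lambda = \bigcup_{\alpha<\lambda}\mathcal{E}_\alpha$ at limit ordinals, and let $\mathcal{E}_{\alpha+1}$ consist of the groups that are either a directed union of groups in $\mathcal{E}_\alpha$ or an extension of a group in $\mathcal{E}_\alpha$ by a finite or abelian group. The structural fact I would import is that $\bigcup_\alpha \mathcal{E}_\alpha$ is exactly the class of elementary amenable groups and that each $\mathcal{E}_\alpha$ is itself closed under subgroups and quotients. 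This is what lets me assign to every elementary amenable group a least ordinal rank and, crucially, keep the ranks of its subgroups and quotients no larger, so that the induction hypothesis is available to them.

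The induction then runs smoothly. For the \emph{base case}, a finite group is trivially virtually polycyclic, and an abelian group satisfying the maximal condition is finitely generated abelian, hence polycyclic. For a \emph{directed union} $G=\bigcup_i G_i$ with each $G_i\in\mathcal{E}_\alpha$, this is where the hypothesis does the real work: since $G$ satisfies the maximal condition the subgroups $G_i$ cannot ascend indefinitely, and by directedness a maximal one among them must equal $G$, reducing us to rank $\alpha$. For an \emph{extension} $1\to N\to G\to Q\to 1$ with $N\in\mathcal{E}_\alpha$ and $Q$ finite or abelian, both $N$ (a subgroup) and $Q$ (a quotient) again satisfy the maximal condition, so by the induction hypothesis each is virtually polycyclic, hence virtually solvable; since virtual solvability is preserved under extensions (easily checked here because $N$, being finitely generated, has only finitely many conjugates of a given finite-index solvable subgroup, whose intersection is a normal finite-index solvable subgroup), $G$ is virtually solvable, and as $G$ satisfies the maximal condition, Lemma~\ref{lem:VirtualVersion} upgrades this to virtually polycyclic.

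The main obstacle is not the induction itself but the two imported facts underpinning it: that the Chou hierarchy exhausts the elementary amenable class with each level closed under subgroups and quotients, and that virtual solvability is closed under extensions. Granting these, the maximal condition collapses every directed union to a single term and the extension step is absorbed by Lemma~\ref{lem:VirtualVersion}; this collapse of directed unions is precisely the mechanism behind Osin's theorem.
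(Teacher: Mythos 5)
Your argument is sound, but note that the paper does not prove this lemma at all: it is imported verbatim as Theorem~2.1 of Osin \cite{Os02}, so what you have written is essentially a reconstruction of Osin's own proof rather than an alternative to anything in the paper. The skeleton --- transfinite induction along Chou's hierarchy, using the facts that the hierarchy exhausts the elementary amenable class and that each level is closed under subgroups and quotients --- is the right one, and those imported facts are genuine theorems of Chou. The base case and the directed-union step are complete as written; for the latter you could argue even more directly that the maximal condition forces $G$ itself to be finitely generated (all subgroups of a group with the maximal condition are, by the paper's own lemma), so its finitely many generators already lie in a single $G_i$ by directedness. The one place where you compress a real step is the extension case: after replacing a finite-index solvable subgroup of $N$ by the intersection $S_0$ of its finitely many $G$-conjugates (finitely many because $N$, satisfying the maximal condition, is finitely generated), what you actually obtain is that $G/S_0$ is finite-by-(virtually solvable), and you still need the standard centralizer argument --- the centralizer of the finite normal subgroup $N/S_0$ has finite index in $G/S_0$ and is (central finite)-by-solvable, hence solvable --- before you can pull back a finite-index solvable subgroup to $G$ and invoke Lemma~\ref{lem:VirtualVersion}. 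Alternatively, you could split on whether the quotient $Q$ is finite (then $G$ is virtually $N$ and you are done immediately) or abelian. With that detail supplied, the proposal is a correct, self-contained substitute for the citation.
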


\begin{theorem}
Let $G$ be a group such that every finitely-generated subgroup of it either contains a free group on two generators or is elementarily amenable. Then the following are equivalent:
\begin{itemize}
\item $G$ satisfies the maximal condition on subgroups,
\item $G$ is virtually polycyclic,
\item $G$ has the weak Markov property,
\item $G$ is of Markov type.
\end{itemize}
\end{theorem}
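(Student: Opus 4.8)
The plan is to establish the four-way equivalence by showing a cycle of implications, leaning on the results already assembled in the excerpt so that very little genuinely new work remains. The key observation is that three of the four implications are already available in full generality, independent of the Tits-type hypothesis on $G$; the hypothesis is needed only for the single implication from the maximal condition to virtual polycyclicity.

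First I would dispatch the easy implications. The equivalence of ``$G$ is of Markov type'' with the other conditions is almost free: by Lemma~\ref{lem:MarkovGSFT}, being of Markov type implies the weak Markov property, and by the preceding lemma (that Markov type passes to subgroups and forces the maximal condition) Markov type implies the maximal condition on subgroups. In the other direction, if $G$ is virtually polycyclic, then it is polycyclic-by-finite, so Theorem~\ref{thm:PolyMarkov} gives that it is of Markov type. Thus ``virtually polycyclic'' $\Rightarrow$ ``of Markov type'' $\Rightarrow$ ``weak Markov property'', while Theorem~\ref{thm:Max} gives ``weak Markov property'' $\Rightarrow$ ``maximal condition'' (any failure of the maximal condition produces a non-SFT group shift). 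It remains only to close the loop by proving ``maximal condition'' $\Rightarrow$ ``virtually polycyclic.''

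The main obstacle, then, is exactly this last implication, and it is where the Tits-alternative hypothesis enters. Here is the strategy. Suppose $G$ satisfies the maximal condition on subgroups. The maximal condition is equivalent to all subgroups being finitely generated, so in particular $G$ itself is finitely generated. By the hypothesis, a finitely-generated group either contains a free group on two generators or is elementarily amenable. A free group on two generators contains a free group on infinitely many generators, which is not finitely generated, contradicting the maximal condition; hence $G$ cannot contain a free group on two generators, and so $G$ must be elementarily amenable. Now invoke Lemma~\ref{lem:EAGMAX} (Osin): an elementary amenable group satisfies the maximal condition on subgroups if and only if it is virtually polycyclic. Since $G$ is elementary amenable and satisfies the maximal condition, it is virtually polycyclic, as desired.

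This completes the cycle: ``virtually polycyclic'' $\Rightarrow$ ``of Markov type'' $\Rightarrow$ ``weak Markov property'' $\Rightarrow$ ``maximal condition'' $\Rightarrow$ ``virtually polycyclic,'' with the separate observation that ``maximal condition'' $\Rightarrow$ ``of Markov type'' is subsumed because virtual polycyclicity already yields Markov type. I expect essentially no technical difficulty beyond correctly marshalling the cited results; the only subtle point is to verify that the hypothesis genuinely rules out the free-group alternative, which reduces to the elementary fact that $F_2$ contains non-finitely-generated subgroups. One should note that the hypothesis stated in this theorem is exactly the weak Tits alternative of Theorem~\ref{thm:WeakTits}, so this theorem is literally a restatement of Theorem~\ref{thm:WeakTits} augmented with the two further equivalent conditions on Markov type, and the proof above subsumes the earlier one.
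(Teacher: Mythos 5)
Your proposal is correct and follows essentially the same route as the paper: reduce everything to the single implication ``maximal condition $\Rightarrow$ virtually polycyclic'' via Theorem~\ref{thm:PolyMarkov}, Lemma~\ref{lem:MarkovGSFT}, Theorem~\ref{thm:Max} and the subgroup/maximal-condition lemma for Markov type, then rule out the free-group alternative and invoke Lemma~\ref{lem:EAGMAX}. Your version is if anything slightly more explicit than the paper's in noting that the maximal condition makes $G$ itself finitely generated so that the weak Tits hypothesis applies directly to $G$.
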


\begin{proof}
Since virtual polycyclicity proves the weak Markov property and Markov type, and not having the maximal condition on subgroups proves the group does not have the weak Markov property and is not of Markov type, it is enough to show that $G$ is virtually polycyclic if and only if it satisfies the maximal condition on subgroups.

Let $H \leq G$ be finitely-generated. If $H$ contains a two-generator free group, then $G$ contains an infinitely-generated free group, and thus does not satisfy the maximal condition on subgroups.

Suppose then that $H$ does not contain a two-generator free group. Then it is elementary amenable by assumption. It is then enough to show that an elementary amenable group satisfies the maximal condition on subgroups if and only if it is virtually polycyclic. This is Lemma~\ref{lem:EAGMAX}.
\end{proof}

\bibliographystyle{plain}
\bibliography{../../../bib/bib}{}

\end{document}